\LetLtxMacro\todonotestodo\todo
\renewcommand{\todo}[2][]{\todonotestodo[#1]{TODO: {#2}}}
\newtheorem{theorem}{Theorem}
\newtheorem*{rep@theorem}{\rep@title}
\newcommand{\newreptheorem}[2]{%
\newenvironment{rep#1}[1]{%
 \def\rep@title{#2 \ref{##1}}%
 \begin{rep@theorem}}%
 {\end{rep@theorem}}}
\newtheorem{theorem:intro}{Theorem}
\newtheorem{lemma}{Lemma}[section]
\newtheorem{proposition}[lemma]{Proposition}
\theoremstyle{definition}
\newtheorem{remark}[lemma]{Remark}
\newtheorem*{claim*}{Claim}
\newtheorem*{theorem*}{Theorem}
\newtheorem*{corollary*}{Corollary}
\newtheorem*{lemma*}{Lemma}
\title{Equivalent characterizations \\of non-Archimedean uniform spaces}
\author{Daniel Windisch}
\address{Institut für Analysis und Zahlentheorie\\Technische Universität Graz\\
  Kopernikusgasse 24/II\\8010 Graz\\Austria}
\email{\href{mailto:dwindisch@math.tugraz.at}{dwindisch@math.tugraz.at}}
\thanks{D.~Windisch is supported by the Austrian Science Fund (FWF): P~30934}
\thanks{\textit{Mathematics subject classification:} primary: 54E15; secondary 13J99, 22A99}
\thanks{\textit{Key words and phrases:} uniform space, non-Archimedean uniformity, non-Archimedean pseudo-metric, separation axiom, metrizability, $I$-adic topology}
\begin{document}
\maketitle

\begin{abstract}
In this paper, we deal with uniform spaces whose diagonal uniformity admits a basis consisting of equivalence relations. Such \textit{non-Archimedean uniform spaces} are particularly interesting for applications in commutative ring theory, because uniformities stemming from valuations or directed systems of ideals are of this type.\\
In general, apart from diagonal uniformities, there are two further approaches to the concept of uniform spaces: covering uniformities and systems of pseudo-metrics. For each of these ways of defining a uniformity, we isolate a non-Archimedean special case and show that these special cases themselves are equivalent to the notion of non-Archimedean diagonal uniformities.\\ 
Moreover, we formulate a seperation axiom for topological spaces $X$ that tells exactly when the topology on $X$ is induced by a non-Archimedean uniformity. In analogy to the classical metrizability theorems, we characterize when a non-Archimedean uniformity comes from a single pseudo-metric.
\end{abstract}

\section{Introduction}

Throughout this work, let $X$ be a set. Recall that a \textit{diagonal uniformity} $\mathfrak{D}$ on $X$ is a non-empty collection of relations (called \textit{entourages}) on $X$ satisfying certain axioms. A basis $\mathcal{B}$ of $\mathfrak{D}$ is a subset of $\mathfrak{D}$ such that every entourage in $\mathfrak{D}$ contains an element of $\mathcal{B}$. For a general introduction to uniform spaces, see~\cite{Willard}.\\
Following Monna~\cite{Monna}, who noticed that a non-Archimedean metric induces the following special type of uniformity, a diagonal uniformity $\mathfrak{D}$ on $X$ is called \textit{non-Archimedean}, if it has a basis $\mathcal{B}$ consisting of equivalence relations. In many settings, especially in commutative algebra and Abelian group theory, non-Archimedean uniformities are used extensively. For instance, the completion of a commutative ring $R$ with respect to a valuation or a directed system of ideals is a special case of the more general concept of completions of uniform spaces, and the original uniformity defined on $R$ is non-Archimedean (cf.~\cite[Chapter 10]{Atiyah}). In this context, one often freely uses the correspondence of diagonal uniformities, covering uniformities and systems of pseudometrics. So the following question arises naturally:\\

\noindent
\textbf{Question.} Is it possible to characterize those covering uniformities and systems of pseudo-metrics, respectively, that correspond to non-Archimedean diagonal uniformities?\\

One goal of this article is therefore to isolate special cases in the definitions of uniform structures via uniform covers and systems of pseudo-metrics, respectively, such that these special cases are equivalent to the notion of non-Archimedean diagonal uniformities. A main result is

\newtheorem{theorem1}{Theorem}
\begin{theorem1}
For a diagonal uniformity $\mathfrak{D}$ on $X$ the following are equivalent:
\begin{itemize}
\item[(A)] $\mathfrak{D}$ is non-Archimedean.
\item[(B)] There exists a system $M$ of non-Archimedean pseudo-metrics on $X$ that induces the uniformity $\mathfrak{D}$ (see Remark \ref{Remark:metrics}).
\item[(C)] The corresponding covering uniformity $\mu_\mathfrak{D}$ of $\mathfrak{D}$ (see Remark \ref{Remark:coverings}) has a basis consisting of partitions of $X$.
\end{itemize}
\end{theorem1}

It is well-known that uniformizability for a topological space is equivalent to T$_{3 \frac{1}{2}}$ (Willard~\cite{Willard} uses the term \textit{completely regular} for T$_{3 \frac{1}{2}}$). We give a similar theorem for non-Archimedean uniformizability using a stronger version of this seperation axiom:

\newtheorem{theorem2}[theorem1]{Theorem}
\begin{theorem2}
A topological space $X$ is uniformizable by a non-Archimedean uniformity if and only if it satisfies $\text{T}_\text{A}$, i.e. for every closed subset $A \subseteq X$ and every $x \in X \setminus A$ there are open sets $U_1,U_2 \subseteq X$ such that 
\begin{itemize}
\item[(i)] $U_1 \cap U_2 = \emptyset $, $U_1 \cup U_2 = X$,
\item[(ii)] $A \subseteq U_1$, $x \in U_2$.
\end{itemize}
\end{theorem2}

\noindent
Moreover, we are able to characterize the case when a non-Archimedean uniform space is pseudo-metrizable by a single non-Archimedian pseudo-metric. Results closely related to ours were given by Monna~\cite[Th\'{e}or\`{e}me 13]{Monna}, Banaschewski~\cite[Satz 6]{Banaschewski} and De Groot \cite[Theorem II]{Groot}, but these authors assume uniformities to be seperating (i.e., the induced topology is Hausdorff) and therefore formulate stronger results dealing with metrizability. Our theorem is similar to the general case, where a uniform space is pseudo-metrizable if and only if there is a countable system of pseudo-metrics inducing the uniformity (cf.~\cite[Theorem 38.3]{Willard}), but the method used there does not apply to our set-up.

\newtheorem{theorem3}[theorem1]{Theorem}
\begin{theorem3}
A uniformity is pseudo-metrizable by a single non-Archimedean pseudo-metric if and only if it is induced by a countable system of non-Archimedean pseudo-metrics.
\end{theorem3}

\vspace{0.5cm}

\begin{center}
\begin{tabular}{ |p{3.9cm}||p{4.5cm}|p{4.5cm}|  }
 \hline
 \multicolumn{3}{|c|}{Overview of Results} \\
 \hline
  & General uniform space & non-Arch. uniform space \\
 \hline
  Diagonal uniformity  & Basis of relations  &Basis of equivalence rel. \\
   & & \\
 Covering uniformity&   Basis of covers& Basis of partitions   \\
   & & \\
 Pseudo-metrics &System of pseudo-metrics & System of non-Arch. pseudo-metrics\\
 \hline
 Uniformizability   &$\text{T}_{3\frac{1}{2}}$  & $\text{T}_\text{A}$ \\
   & & \\
 Pseudo-metrizability&   countable system of pseudo-metrics  & countable system of non-Arch. pseudo-metrics\\
 
 \hline
\end{tabular}
\end{center}

\vspace{0.5cm}

\section{Equivalent approaches to non-Archimedean uniform spaces}

\subsection{Covering uniformities}

The isolation of the desired special case is very easy in the setting of covering uniformities, as we can explicitly write down a bijection between diagonal uniformities and covering uniformities on $X$.

\begin{remark} \label{Remark:coverings}
How to construct a covering uniformity out of a diagonal uniformity and vice versa.

\begin{itemize}
\item[(1)] Let $\mathfrak{D}$ be a diagonal uniformity on $X$, and let $\mathcal{B}$ be a basis of $\mathfrak{D}$. For $x \in X$ and $D \in \mathcal{B}$, let $D[x] := \{ y \in X \mid (x,y) \in D \}$ and $\mathcal{U}_D := \{ D[x] \mid x \in X \}$. Then $\beta_\mathfrak{D} := \{ \mathcal{U}_D \mid D \in \mathcal{B} \}$ is a basis of a covering uniformity $\mu_\mathfrak{D}$ on $X$, and $\mu_\mathfrak{D}$ is independent of the choice of $\mathcal{B}$.
\item[(2)] Let $\beta$ be a basis of a covering uniformity $\mu$ on $X$. For $\mathcal{U} \in \beta$, we define $D_\mathcal{U} := \{ (x,y) \in X \times X \mid \exists U \in \mathcal{U}  :  x,y \in U \}$. Then $\mathcal{B}_\mu := \{ D_\mathcal{U} \mid \mathcal{U} \in \beta \}$ is a basis of a diagonal uniformity $\mathfrak{D}_\mu$ on $X$, and $\mathfrak{D}_\mu$ is independent of the choice of $\beta$.
\item[(3)] The maps $\mathfrak{D} \mapsto \mu_\mathfrak{D}$, $\mu \mapsto \mathfrak{D}_\mu$ are bijections of the set of all diagonal uniformities on $X$ and the set of all covering uniformities on $X$ which are inverse to each other.
\end{itemize}
\end{remark}

\begin{lemma}
\begin{itemize}
\item[(1)] If $\mathfrak{D}$ is non-Archimedean, then $\mu_\mathfrak{D}$ has a basis consisting of partitions.
\item[(2)] If $\mu$ has a basis consisting of partitions, then $D_\mu$ is non-Archimedean.
\item[(3)] The maps in remark 2.1.1 restrict to bijections of non-Archimedean diagonal uniformities and covering uniformities having a basis consisting of partitions.
\end{itemize}
\end{lemma}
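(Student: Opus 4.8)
The plan is to exploit the elementary dictionary between equivalence relations on $X$ and partitions of $X$, and then to deduce the bijection statement formally from Remark~\ref{Remark:coverings}.

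For (1), I would start from a basis $\mathcal{B}$ of $\mathfrak{D}$ consisting of equivalence relations, which exists precisely because $\mathfrak{D}$ is non-Archimedean. The key observation is that when $D \in \mathcal{B}$ is an equivalence relation, the slice $D[x] = \{y \mid (x,y) \in D\}$ is exactly the $D$-equivalence class of $x$; consequently the family $\mathcal{U}_D = \{D[x] \mid x \in X\}$ is the set of $D$-equivalence classes and hence a partition of $X$. Since $\beta_\mathfrak{D} = \{\mathcal{U}_D \mid D \in \mathcal{B}\}$ is already a basis of $\mu_\mathfrak{D}$ by Remark~\ref{Remark:coverings}(1), this basis consists of partitions.

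For (2), I would run the same observation in reverse. Given a basis $\beta$ of $\mu$ whose elements are partitions and a partition $\mathcal{U} \in \beta$, each $x \in X$ lies in a unique block of $\mathcal{U}$, so $(x,y) \in D_\mathcal{U}$ holds if and only if $x$ and $y$ lie in the same block. This makes $D_\mathcal{U}$ an equivalence relation: reflexivity and symmetry are immediate, and transitivity follows from the disjointness of the blocks. Therefore the basis $\mathcal{B}_\mu = \{D_\mathcal{U} \mid \mathcal{U} \in \beta\}$ of $\mathfrak{D}_\mu$ furnished by Remark~\ref{Remark:coverings}(2) consists of equivalence relations, so $\mathfrak{D}_\mu$ is non-Archimedean.

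Finally, (3) should follow purely formally. By Remark~\ref{Remark:coverings}(3) the maps $\mathfrak{D} \mapsto \mu_\mathfrak{D}$ and $\mu \mapsto \mathfrak{D}_\mu$ are mutually inverse bijections between all diagonal and all covering uniformities on $X$. Part (1) shows the first map sends non-Archimedean uniformities into the set of covering uniformities admitting a basis of partitions, and part (2) shows the second map sends the latter set into the non-Archimedean uniformities; since the two maps are already inverse to each other globally, their restrictions to these subsets are automatically inverse bijections between them. I do not expect any genuine obstacle here: the entire content is the equivalence-relation/partition correspondence, and the only points requiring care are checking that $D_\mathcal{U}$ satisfies transitivity (where disjointness of blocks is essential) and confirming that the images under the two constructions land in the claimed subsets, so that the restriction asserted in (3) is well defined.
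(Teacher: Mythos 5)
Your proposal is correct and follows essentially the same route as the paper: the equivalence-relation/partition dictionary gives (1) and (2), and (3) follows formally by restricting the global bijection of Remark~\ref{Remark:coverings}(3). The paper states these steps more tersely, but the content is identical.
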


\begin{proof}
\begin{itemize}
\item[(1)] Let $\mathcal{B} \subseteq \mathfrak{D}$ be a basis consisting of equivalence relations. Then, for all $D \in \mathcal{B}$, we have that $\mathcal{U}_D = \{D[x] \mid x \in X \}$ is a partition of $X$.
\item[(2)] Let $\beta \subseteq \mu$ be a basis consisting of partitions of $X$. For $\mathcal{U} \in \beta$, $D_\mathcal{U} = \{ (x,y) \in X \times X \mid \exists U \in \mathcal{U}  :  x,y \in U \}$ is an equivalence relation.
\item[(3)] is an immediate consequence of (1) and (2).
\end{itemize}
\end{proof}

\subsection{Pseudo-metrics}

\noindent
The systems of pseudo-metrics on $X$ are not in general in one-to-one correspondence with the diagonal uniformities on $X$. Anyway, we can consider systems of pseudo-metrics up to equivalence:

\begin{remark} \label{Remark:metrics}
\begin{itemize}
\item[(1)] Let $M$ be a system of pseudo-metrics on $X$. For every $d \in M$ and every $\varepsilon \in \mathbb{R}_{>0}$ we consider the binary relation $D_\varepsilon^d := \{ (x,y) \in X \times X \mid d(x,y) < \varepsilon \}$. The set $\mathcal{B}_M := \{ D_\varepsilon^d \mid d \in M, \ \varepsilon \in \mathbb{R}_{>0} \}$ is a basis of a diagonal uniformity on $X$, denoted by $\mathfrak{D}_M$. 
\item[(2)] Two systems $M$ and $N$ of pseudo-metrics on $X$ are said to be \textit{equivalent}, if $\mathfrak{D}_M = \mathfrak{D}_N$. 
\item[(3)] A pseudo-metric $d$ on $X$ is said to be \textit{non-Archimedean} if it satisfies the non-Archimedean triangle inequality, i.e. for all $x,y,z \in X$ we have
\begin{align*}
d(x,y) \leq max(d(x,z),d(z,y)).
\end{align*}
\end{itemize}
\end{remark}

\begin{proposition}
\begin{itemize}
\item[(1)] If $d_1$ and $d_2$ are non-Archimedean pseudo-metrics on $X$, then so is $d = sup(d_1,d_2)$.
\item[(2)] If $M$ is a system of non-Archimedean pseudo-metrics, then $\mathcal{B}_M$ consists of equivalence relations. In particular, $\mathfrak{D}_M$ is non-Archimedean.
\end{itemize}
\end{proposition}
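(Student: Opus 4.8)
The plan is to treat the two parts separately, since part (2) is the conceptual heart and part (1) is a supporting closure property. For part (1), I would first recall that for two pseudo-metrics the supremum $d=\sup(d_1,d_2)$ is understood pointwise, so $d(x,y)=\max(d_1(x,y),d_2(x,y))$. The pseudo-metric axioms (non-negativity, symmetry, and $d(x,x)=0$) pass to the pointwise maximum immediately because they hold coordinatewise. The only nontrivial point is the non-Archimedean triangle inequality, and I would establish it by a monotonicity argument: for each index $i\in\{1,2\}$ and all $x,y,z\in X$ we have
\begin{align*}
d_i(x,y) \leq \max(d_i(x,z),d_i(z,y)) \leq \max(d(x,z),d(z,y)),
\end{align*}
where the first inequality is the non-Archimedean hypothesis on $d_i$ and the second uses $d_i(x,z)\leq d(x,z)$ and $d_i(z,y)\leq d(z,y)$. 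Taking the maximum over $i$ on the left-hand side yields $d(x,y)\leq\max(d(x,z),d(z,y))$, as required.

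For part (2), I would show that each basis element $D_\varepsilon^d=\{(x,y)\in X\times X\mid d(x,y)<\varepsilon\}$, for $d\in M$ a non-Archimedean pseudo-metric and $\varepsilon\in\mathbb{R}_{>0}$, is an equivalence relation. Reflexivity follows from $d(x,x)=0<\varepsilon$, and symmetry follows from $d(x,y)=d(y,x)$. The essential step is transitivity: if $(x,y),(y,z)\in D_\varepsilon^d$, then $d(x,y)<\varepsilon$ and $d(y,z)<\varepsilon$, so the non-Archimedean triangle inequality gives $d(x,z)\leq\max(d(x,y),d(y,z))<\varepsilon$, whence $(x,z)\in D_\varepsilon^d$. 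Since every element of $\mathcal{B}_M$ has this form, $\mathcal{B}_M$ consists of equivalence relations, and because $\mathcal{B}_M$ is by construction (Remark \ref{Remark:metrics}(1)) a basis of $\mathfrak{D}_M$, the uniformity $\mathfrak{D}_M$ is non-Archimedean by definition.

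Overall the argument is short, and the only place requiring genuine use of the hypothesis is precisely the transitivity computation in part (2), where the strong (ultrametric) triangle inequality is what upgrades the ordinary metric ball into a bona fide equivalence class; the ordinary triangle inequality would only give $d(x,z)<2\varepsilon$, which is insufficient. I therefore expect no real obstacle beyond stating the inequalities cleanly, and I would make sure to emphasize that it is exactly the failure of this step for general pseudo-metrics that explains why arbitrary systems of pseudo-metrics need not yield non-Archimedean uniformities.
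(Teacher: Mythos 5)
Your proposal is correct and follows essentially the same route as the paper: part (2) is verbatim the paper's argument (reflexivity and symmetry are immediate, transitivity via $d(x,z)\leq\max(d(x,y),d(y,z))<\varepsilon$), and part (1) differs only cosmetically, since bounding each $d_i(x,y)$ by $\max(d(x,z),d(z,y))$ and taking the maximum over $i$ is the same computation as the paper's rearrangement of nested maxima. No gaps.
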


\begin{proof}
\begin{itemize}
\item[(1)] Let $x,y,z \in X$. Then we have
\begin{align*}
d(x,y) &= max(d_1(x,y),d_2(x,y)) \\
		&\leq max(max(d_1(x,z),d_1(z,y)),max(d_2(x,z),d_2(z,y))) \\
		&= max(d_1(x,z),d_2(x,z),d_1(z,y),d_2(z,y)) \\
		&= max(max(d_1(x,z),d_2(x,z)),max(d_1(z,y),d_2(z,y))) \\
		&= max(d(x,z),d(z,y)).
\end{align*}
So $d$ is non-Archimedean.

\item[(2)] Let $\varepsilon \in \mathbb{R}_{>0}$ and $d \in M$. Clearly, $D_\varepsilon^d$ is reflexive and symmetric. So let $(x,z),(z,y) \in D_\varepsilon^d$. Then $d(x,y) \leq max(d(x,z),d(z,y)) < \varepsilon$, which means that $(x,y) \in D_\varepsilon^d$.
\end{itemize}
\end{proof}

\noindent
The next proposition in particular shows that every system of pseudo-metrics, that induces a non-Archimedean diagonal uniformity, is equivalent to a system of non-Archimedean pseudo-metrics. In the proof, we give an explicit construction of a system of pseudo-metrics using a basis consisting of equivalence relations which is very intuitive. In the general case, the construction of pseudo-metrics based on relations is much more involved, since the relations need not even be symmetric.

\begin{proposition}
If $\mathfrak{D}$ is a non-Archimedean diagonal uniformity on $X$, then there exists a system $M$ of non-Archimedean pseudo-metrics on $X$ such that $\mathfrak{D} = \mathfrak{D}_M$.
\end{proposition}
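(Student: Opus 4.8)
The plan is to produce, for each equivalence relation in a basis of $\mathfrak{D}$, a single non-Archimedean pseudo-metric whose small-threshold relation recovers that equivalence relation, and then to let $M$ be the collection of all these pseudo-metrics. Since $\mathfrak{D}$ is non-Archimedean, I would fix a basis $\mathcal{B}$ of $\mathfrak{D}$ consisting of equivalence relations, and for each $D \in \mathcal{B}$ define $d_D \colon X \times X \to \mathbb{R}_{\geq 0}$ by $d_D(x,y) = 0$ if $(x,y) \in D$ and $d_D(x,y) = 1$ otherwise.

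The first step is to check that each $d_D$ is a non-Archimedean pseudo-metric. Reflexivity and symmetry of $D$ give $d_D(x,x) = 0$ and symmetry of $d_D$. The non-Archimedean triangle inequality is the place where transitivity of $D$ is used: if $d_D(x,y) = 1$, i.e. $(x,y) \notin D$, then transitivity forbids having both $(x,z) \in D$ and $(z,y) \in D$, so at least one of $d_D(x,z)$, $d_D(z,y)$ equals $1$ and the inequality holds; the case $d_D(x,y) = 0$ is immediate.

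Next I would identify the basic entourages attached to $d_D$. For $0 < \varepsilon \leq 1$ one has $D_\varepsilon^{d_D} = \{(x,y) \mid d_D(x,y) = 0\} = D$, while for $\varepsilon > 1$ one gets $D_\varepsilon^{d_D} = X \times X$. Hence, setting $M = \{ d_D \mid D \in \mathcal{B} \}$, the basis $\mathcal{B}_M$ of $\mathfrak{D}_M$ is exactly $\mathcal{B} \cup \{ X \times X \}$.

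Finally I would conclude that $\mathfrak{D}_M = \mathfrak{D}$. Since $X \times X$ lies in every diagonal uniformity, we have $\mathcal{B} \subseteq \mathcal{B}_M \subseteq \mathfrak{D}$; as $\mathcal{B}$ already generates $\mathfrak{D}$, the intermediate family $\mathcal{B}_M$ generates the same uniformity, giving $\mathfrak{D}_M = \mathfrak{D}$. I do not expect a genuine obstacle here: the single delicate point is that the non-Archimedean triangle inequality for $d_D$ rests precisely on the transitivity of $D$, which is exactly the feature distinguishing the non-Archimedean case and making this construction far more direct than the general one alluded to before the statement.
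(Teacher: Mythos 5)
Your proof is correct, but it takes a genuinely more economical route than the paper. The paper builds, for every countable descending chain $\kappa=(D_n)_{n\in\mathbb{N}}$ in $\mathcal{B}$ starting at $X\times X$, a pseudo-metric $d_\kappa$ with values in $\{0\}\cup\{1/n\}$, and then closes the resulting family under finite suprema before verifying the two inclusions between $\mathfrak{D}$ and $\mathfrak{D}_M$. Your two-valued pseudo-metric $d_D$ is exactly the paper's $d_\kappa$ for the constant chain $D_1=X\times X$, $D_n=D$ $(n\geq 2)$, and your observation is that these constant chains already suffice: since $D_\varepsilon^{d_D}=D$ for $\varepsilon\leq 1$, one gets $\mathcal{B}_M=\mathcal{B}\cup\{X\times X\}$ and the equality $\mathfrak{D}_M=\mathfrak{D}$ becomes essentially tautological, with no need for suprema. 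One point worth making explicit: for an arbitrary system $M$, the family $\mathcal{B}_M$ of Remark \ref{Remark:metrics} is a priori only a filter subbasis (two sets $D_\varepsilon^{d_1}$, $D_\varepsilon^{d_2}$ need not contain a common member of $\mathcal{B}_M$), which is precisely why the paper closes $M$ under $\sup$; in your construction this is harmless because $\mathcal{B}_M=\mathcal{B}\cup\{X\times X\}$ inherits the filter-base property from $\mathcal{B}$ being a basis of $\mathfrak{D}$, but you should say so rather than leave it implicit. What the paper's heavier construction buys is reusability: the chain-indexed pseudo-metrics are exactly the device needed in Section 4, where a single chain running through a countable basis produces a single non-Archimedean pseudo-metric, something the two-valued metrics cannot deliver.
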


\begin{proof}
Let $\mathcal{B} \subseteq \mathfrak{D}$ be a basis consisting of equivalence relations and consider the set
\begin{align*}
AK(\mathcal{B}) := \{ (D_n)_{n \in \mathbb{N}} \mid D_1 = X \times X \land \forall n \in \mathbb{N}_{\geq 2} : D_n \in \mathcal{B} \land \forall n \in \mathbb{N} : D_{n+1} \subseteq D_n \}
\end{align*}
of all countable descending chains in $\mathcal{B}$ starting at $X \times X$. \\
For $\kappa = (D_n)_{n \in \mathbb{N}} \in AK(\mathcal{B})$, we define
\begin{align*}
d_\kappa : X \times X &\to \mathbb{R}_{\geq 0} \\
			(x,y)     &\mapsto \begin{cases}
								0 \text{ if } \forall n \in \mathbb{N}: (x,y) 																			\in D_n\\
								\frac{1}{n} \text{ if } n = max\{ m \in 														\mathbb{N} \mid (x,y) \in D_m \}.
								\end{cases} 
\end{align*}
We first show that $d_\kappa$ is a non-Archimedean pseudo-metric for all $\kappa = (D_n)_{n \in \mathbb{N}}~\in~AK(\mathcal{B})$. Let $x,y,z \in X$. Clearly, $d_\kappa(x,x) = 0$. Since all relation in $\mathcal{B}$ are symmetric, we have $d_\kappa(x,y) = d_\kappa(y,x)$. \\
For the proof of the non-Archimedean triangle inequality, first assume that $d_\kappa(x,y) = d_\kappa(y,z) = 0$. Since all $D_n$ are transitive, we have that $d_\kappa(x,z) = 0$. If $d_\kappa(x,y)= 0$ and $d_\kappa(y,z) = \frac{1}{l}$ for some $l \in \mathbb{N}$, then by the transitivity of $D_l$ we have $(x,z) \in D_l$. Therefore $d_\kappa(x,z) \leq \frac{1}{l}$. \\
Finally, if $d_\kappa(x,y) = \frac{1}{l}$ and $d_\kappa(y,z) = \frac{1}{k}$ for some $l,k \in \mathbb{N}$, we assume without loss of generality that $l \geq k$, so $\frac{1}{l} \leq \frac{1}{k}$. Therefore, we have that $(x,y),(y,z) \in D_k$, so $(x,z) \in D_k$ and hence $d_\kappa(x,z) \leq \frac{1}{k} = d_\kappa(y,z)$. \\
Now let 
\begin{align*}
M := \{ sup(d_{\kappa_1},...,d_{\kappa_n}) \mid n \in \mathbb{N}, \kappa_1,...,\kappa_n \in AK(\mathcal{B}) \}.
\end{align*}
By Proposition 2.2.2(1), $M$ is a system of non-Archimedean pseudo-metrics. We prove that $\mathfrak{D} = \mathfrak{D}_M$, which splits into the following two assertions:

\begin{itemize}
\item[(i)] Given $D \in \mathcal{B}$, there exist $d \in M$ and $\varepsilon > 0$ such that $D^d_\varepsilon \subseteq D$.
\item[(ii)] Given $ d \in M$ and $\varepsilon > 0$, there exists $D \in \mathfrak{D}$ such that $D \subseteq D^d_\varepsilon$.
\end{itemize}
For the proof of (i), let $D \in \mathcal{B}$. Then $\kappa := (D_n)_{n \in \mathbb{N}}$ with $D_1 = X \times X$ and $D_n = D$, for all $n \geq 2$, is an element of $AK(B)$. Now let $\varepsilon > 0$ such that $\varepsilon < \frac{1}{2}$ and define $d = d_\kappa$. For $(x,y) \in D_\varepsilon^d$ we have $d(x,y) < \varepsilon < \frac{1}{2}$, so $(x,y) \in D_2 = D$. \\
Now consider $d = sup(d_1,...,d_n)$ where $d_i = d_{\kappa_i}$ and $\kappa_i = (D_m^{(i)})_{m \in \mathbb{N}} \in AK(\mathcal{B})$ for all $i \in \{1,...,n\}$. Moreover, let $\varepsilon > 0$ and $k \in \mathbb{N}$ such that $\frac{1}{k} < \varepsilon$. The desired relation is
\begin{align*}
D := \bigcap \limits_{i=1}^n D_k^{(i)} \in \mathfrak{D}.
\end{align*}
To see this, let $(x,y) \in D$. Then we have $d_i(x,y) \leq \frac{1}{k} < \varepsilon$ for all $i \in \{1,...,n\}$, so $d(x,y) < \varepsilon$, i.e. $(x,y) \in D^\varepsilon_d$. \\
So $M$ is a system of non-Archimedean pseudo-metrics such that $\mathfrak{D} = \mathfrak{D}_M$.
\end{proof}

\begin{theorem}
For a diagonal uniformity $\mathfrak{D}$ on $X$ the following are equivalent:
\begin{itemize}
\item[(A)] $\mathfrak{D}$ is non-Archimedean.
\item[(B)] There exists a system $M$ of non-Archimedean pseudo-metrics on $X$ that induces the uniformity~$\mathfrak{D}$.
\item[(C)] The corresponding covering uniformity $\mu_\mathfrak{D}$ of $\mathfrak{D}$ has a basis consisting of partitions of $X$.
\end{itemize}
\end{theorem}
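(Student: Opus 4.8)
The plan is to deduce the theorem directly from the preparatory results of the two subsections, since all the substantive work has already been done; the proof amounts to assembling the pieces and invoking the bijection of Remark 2.1.1(3). I would organize it as two separate equivalences, $(A)\Leftrightarrow(B)$ and $(A)\Leftrightarrow(C)$, each split into its two implications.

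For $(A)\Leftrightarrow(B)$: the implication $(A)\Rightarrow(B)$ is precisely the content of Proposition 2.2.3, which produces, from a basis of equivalence relations, an explicit system $M$ of non-Archimedean pseudo-metrics with $\mathfrak{D}=\mathfrak{D}_M$. The converse $(B)\Rightarrow(A)$ is immediate from Proposition 2.2.2(2): any system of non-Archimedean pseudo-metrics gives rise to a basis $\mathcal{B}_M$ consisting of equivalence relations, whence $\mathfrak{D}=\mathfrak{D}_M$ is non-Archimedean by definition.

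For $(A)\Leftrightarrow(C)$: the implication $(A)\Rightarrow(C)$ is Lemma 2.1.2(1). For $(C)\Rightarrow(A)$ I would first note that by the bijection of Remark 2.1.1(3) we have $\mathfrak{D}=\mathfrak{D}_{\mu_\mathfrak{D}}$; since $\mu_\mathfrak{D}$ is assumed to have a basis of partitions, Lemma 2.1.2(2) applies and shows that $\mathfrak{D}_{\mu_\mathfrak{D}}$, and hence $\mathfrak{D}$, is non-Archimedean. This closes both cycles.

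I do not expect a genuine obstacle here: the statement is essentially a corollary of the three preceding results, and the only point requiring slight care is to invoke the inverse relation $\mathfrak{D}=\mathfrak{D}_{\mu_\mathfrak{D}}$ from Remark 2.1.1(3) in the step $(C)\Rightarrow(A)$, so that Lemma 2.1.2(2) is applied to the uniformity $\mathfrak{D}$ itself and not to an a priori different one. All the real difficulty was concentrated in Proposition 2.2.3, namely the explicit construction of non-Archimedean pseudo-metrics from countable descending chains of equivalence relations.
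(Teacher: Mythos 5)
Your proposal is correct and follows essentially the same route as the paper: the equivalence of (A) and (C) via Lemma 2.1.2 (with the bijection of Remark 2.1.1(3) implicit in its part (3)), (B) $\Rightarrow$ (A) via Proposition 2.2.2, and (A) $\Rightarrow$ (B) via Proposition 2.2.3. Your extra care in making the identity $\mathfrak{D}=\mathfrak{D}_{\mu_\mathfrak{D}}$ explicit in the step (C) $\Rightarrow$ (A) is a minor refinement of the same argument.
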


\begin{proof}
The equivalence of (A) and (C) follows immediately from Lemma 2.1.2. \\
That (B) implies (A) follows from Proposition 2.2.2. \\
The implication from (A) to (B) was just proven in Proposition 2.2.3.
\end{proof}

\section{An equivalent seperation axiom}

\theoremstyle{definition}
\newtheorem{2.1}{Definition}[section]
\begin{2.1} 
Let $(X, \tau)$ be a topological space.
\begin{itemize}
\item[(1)]$X$ is called \textit{uniformizable by a non-Archimedean uniformity} if there exists a non-Archimedean uniformity on $X$ that induces $\tau$.
\item[(2)] $X$ is said to satisfy $\text{T}_\text{A}$ if for every closed subset $A \subseteq X$ and every $x \in X \setminus A$ there are open sets $U_1,U_2 \subseteq X$ such that 
\begin{itemize}
\item[(i)] $U_1 \cap U_2 = \emptyset $, $U_1 \cup U_2 = X$,
\item[(ii)] $A \subseteq U_1$, $x \in U_2$.
\end{itemize}
\item[(3)] $X$ is said to be \textit{zero-dimensional} (\textit{with respect to the small inductive dimension}), if it has a basis of clopen sets. 
\end{itemize}
\end{2.1}

The equivalence of (1) and (3) in the following theorem has already been shown by Banaschewski~\cite{Banaschewski} for Hausdorff spaces.

\theoremstyle{plain}

\newtheorem{2.2}[2.1]{Theorem}
\begin{2.2}
For a topological space $(X,\tau)$, the following are equivalent:
\begin{itemize}
\item[(1)] $X$ is uniformizable by a non-Archimedean uniformity.
\item[(2)] $X$ satisfies $\text{T}_\text{A}$.
\item[(3)] $X$ is zero-dimensional.
\end{itemize}
\end{2.2}

\begin{proof}
"(2) $\Rightarrow$ (1)" Now assume that $X$ satisfies $\text{T}_\text{A}$. Let $S = \{f: X \to \{0,1\} \mid f \text{ is continuous} \}$, where $\{0,1\}$ carries the discrete topology. For every $f \in S$, we define an equivalence relation 
\begin{align*}
D_f = \{ (x,y) \in X \times X \mid f(x) = f(y) \}.
\end{align*}
It is immediate that $\mathcal{B} := \{D_{f_1} \cap ... \cap D_{f_n} \mid f_i \in S \}$ is a basis consisting of equivalence relations for a non-Archimedean uniformity $\mathfrak{D}$ on $X$. We denote by $\sigma$ the topology on $X$ induced by $\sigma$ and assert that $\tau = \sigma$. \\
Let $A \subseteq X$ be closed with respect to $\tau$ and $x \in X \setminus A$. Let $U_1, U_2 \subseteq X$ open with respect to $\tau$ such that $U_1 \cap U_2 = \emptyset$, $U_1 \cup U_2 = X$, $x \in U_2$, $A \subseteq U_1$ and let $f_x: X \to \{0,1\}$ be such that $f_x$ takes the value $0$ on $U_2$ and the value $1$ on $U_1$. Then clearly $f_x$ is continuous with respect to $\tau$. Per definition,
\begin{align*}
D_{f_x}[x] = \{y \in X \mid f_x(y) = f_x(x) = 0 \} = U_2
\end{align*}
is open with respect to $\sigma$ and disjoint from $A$. Since $x \in X \setminus A$ was arbitrarily chosen, we conclude that
\begin{align*}
A = X \setminus \bigcup \limits_{x \in X \setminus A} D_{f_x}[x]
\end{align*}
is closed with respect to $\sigma$. So $\sigma$ is finer than $\tau$.\\
For the reverse direction, it suffices to show that $D[x]$ is open with respect to $\tau$ for all $x \in X$ and $D \in \mathcal{B}$. But since $D = D_{f_1} \cap ... \cap D_{f_n}$ for some $f_1,...,f_n \in S$ and $D[x] = D_{f_1}[x] \cap ... \cap D_{f_n}[x]$ for all $x \in X$, we just have to show this for $D_f$ with $f \in S$. There, for every $x \in X$, we have $D_f[x] = \{y \in X \mid f(x) = f(y) \} = f^{-1}(\{f(x)\})$. Since $f$ is continuous and $\{f(x)\}$ is open, it follows that $D_f[x] \in \tau$. \\
So $X$ is uniformizable by a non-Archimedean uniformity. \\
"(3) $\Rightarrow$ (2)" Let $A \subseteq X$ be closed and $x \in X \setminus A$. Since $X \setminus A$ is open and therefore a union of clopen sets, there exists some $U_2 \subseteq X \setminus A$ that is clopen and contains $x$. Set $U_1 = X \setminus U_2$.
\end{proof}

\noindent
Note that, although $\text{T}_\text{A}$ implies $\text{T}_{3\frac{1}{2}}$, it does not fit into the classical hierachy of seperation axioms, as normal spaces are not necessarily $\text{T}_\text{A}$. Consider for instance $\mathbb{R}$ carrying the canonical topology. This space is normal, but it is connected, hence does not satisfy $\text{T}_\text{A}$.

\section{Pseudo-metrizability}

\noindent
We now proceed with a characterization of the case where a uniform space is pseudo-metrizable by a single non-Archimedean pseudo-metric. 

\newtheorem{3.1}{Theorem}[section]
\begin{3.1}
Let $(X,\mathfrak{D})$ be a uniform space. Then the following are equivalent:
\begin{itemize}
\item[(A)] $(X,\mathfrak{D})$ is pseudo-metrizable by a non-Archimedean pseudo-metric.
\item[(B)] $\mathfrak{D}$ is induced by a countable system of non-Archimedean pseudo-metrics.
\item[(C)] $\mathfrak{D}$ possesses a countable basis consisting of equivalence relations.
\end{itemize}
\end{3.1}

\begin{proof}
(A)$\Rightarrow$(B) is trivial.\\
For (B)$\Rightarrow$(C), let $(d_n)_{n \in \mathbb{N}}$ be a countable system of non-Archimedean pseudo-metrics inducing $\mathfrak{D}$. Then $(D^{d_n}_{1/m})_{n,m \in \mathbb{N}}$ is a countable basis of $\mathfrak{D}$ consisting of equivalence relations. \\
Now let $(E_n)_{n \in \mathbb{N}}$ be a basis for $\mathfrak{D}$ consisting of equivalence relations and assume without restriction that $E_1 = X \times X$. We get another countable basis $(D_n)_{n \in \mathbb{N}}$ (of equivalence relations) by applying the properties of a basis of a uniformity and iterating the following recursion:
\begin{center}
$D_1 := E_1$ \\
$\text{Choose } D_{i+1} \in \{E_n \mid n \in \mathbb{N} \} \text{ such that } D_{i+1} \subseteq D_i \cap E_{i+1},$
\\ $\text{for } i \in \mathbb{N}.$
\end{center}
Note that $(D_n)_{n \in \mathbb{N}}$ is a decreasing chain, i.e. $X \times X = D_1 \supseteq D_2 \supseteq D_3 \supseteq ...$, what allows us to define a non-Archimedean pseudo-metric as follows:
\begin{align*}
d : X \times X &\to \mathbb{R}_{\geq 0} \\
			(x,y)     &\mapsto \begin{cases}
								0 \text{ if } \forall n \in \mathbb{N}: (x,y) 																			\in D_n\\
								\frac{1}{n} \text{ if } n = max\{ m \in 														\mathbb{N} \mid (x,y) \in D_m \}.
								\end{cases} 
\end{align*}
To see that $d$ induces $\mathfrak{D}$, let $\varepsilon > 0$ and $n \in \mathbb{N}$ such that $\frac{1}{n} < \varepsilon$. For $(x,y) \in D_n$, we have $d(x,y) \leq \frac{1}{n} < \varepsilon$, which implies $(x,y) \in D_\varepsilon^d$. \\
On the other hand, let $n \in \mathbb{N}$. For $(x,y) \in D^d_\frac{1}{n+1}$, it follows that $d(x,y) \leq \frac{1}{n+1} < \frac{1}{n}$, hence $(x,y) \in D_n$.
\end{proof}


\begin{thebibliography}{99}

\bibitem{Atiyah}
M.F. Atiyah, I.G. MacDonald, \textit{Introduction to Commutative Algebra}. Addison-Wesley Publishing Company, Reading, MA (1969).

\bibitem{Banaschewski}
B. Banaschewski, \textit{Über nulldimensionale Räume}. Math. Nachr. \textbf{13} (1955), 129-140.

\bibitem{Groot}
J. de Groot, \textit{Non-Archimedean metrics in topology}. Proc. Am. Math. Soc. \textbf{7:5} (1956), 948--953.

\bibitem{Monna}
A.F. Monna, \textit{Rémarques sur les métrique non-archimédiennes} I - II. Indag. Math. \textbf{12} (1950), 122-133, 179-191.

\bibitem{Willard}
St. Willard, \textit{General Topology}. Addison-Wesley Publishing Company, Reading, MA (1970).

\end{thebibliography}
\end{document}